\newtheorem{Cor}{Corollary}
 \newtheorem{Lemma}{Lemma}
 \newtheorem{ex}{Example}
 \newtheorem{Proposition}{Proposition}
 \theoremstyle{definition}
 \theoremstyle{remark}
 \newtheorem{Remark}[Lemma]{Remark}
 \numberwithin{equation}{subsection}
\begin{document}
\title[DYNAMICS OF FINITELY GENERATED NON-AUTONOMOUS SYSTEMS]{DYNAMICS OF FINITELY GENERATED NON-AUTONOMOUS SYSTEMS}%
\author{MANISH RAGHAV AND PUNEET SHARMA}
\address{Department of Mathematics, I.I.T. Jodhpur, NH 65, Karwar, Jodhpur-342037, INDIA}%
\email{puneet@iitj.ac.in, manishrghv@gmail.com }%

\thanks{The first author thanks National Board for Higher Mathematics (NBHM) for financial support.}%

\subjclass[2010]{37B20, 37B55, 54H20}

\keywords{non-autonomous dynamical systems, transitivity, weakly
mixing, topological mixing, topological entropy, Li-Yorke chaos}

\begin{abstract}
In this paper, we discuss dynamical behavior of a non-autonomous system generated by a finite family $\mathbb{F}$. In the process, we relate the dynamical behavior of the non-autonomous system generated by the family $\mathbb{F}=\{f_1,f_2,\ldots,f_k\}$ with the dynamical behavior of the system $(X,f_k\circ f_{k-1}\circ\ldots\circ f_1)$. We discuss properties like minimality, equicontinuity, proximality and various forms of sensitivities for the two systems. We derive conditions under which the dynamical behavior of $(X,f_k\circ f_{k-1}\circ\ldots\circ f_1)$ is carried forward to $(X,\mathbb{F})$ (and vice-versa). We also give examples to illustrate the necessity of the conditions imposed.
\end{abstract}
\maketitle

\section{INTRODUCTION}
For a long time, dynamical systems have been used to determine the long term behavior of various natural and physical systems around us. The theory provides an alternative mathematical approach to investigate the qualitative behavior of any natural or physical system under investigation. The theory helps not only to determine long term behavior of the underlying system but also predicts any long term complexities that may arise in the given system. As a result the theory has found applications in a variety of fields such as population dynamics, control engineering and computational neuroscience\cite{iz,nar,zh}. Although dynamical systems have been used to approximate many natural and physical systems with sufficient accuracy, theory of autonomous systems is used to predict the long term behavior of the system. However, as a time varying governing rule provides a better insight of the underlying system, approximating the given system using non-autonomous dynamical systems provides a better approximation of the underlying system. Consequently, it is important to develop the theory of non-autonomous dynamical systems. The problem has been addressed in recent times and some interesting results have been obtained. In \cite{sk1}, the authors investigate the topological entropy of the non-autonomous system generated by a equicontinuous family or a uniformly convergent sequence of self maps. In \cite{sk2}, the authors investigate the minimality conditions for a non-autonomous system on in a compact Hausdorff setting. In \cite{pm1}, the authors investigate the dynamics of a non-autonomous system generated by a finite collection of maps. In particular, they investigate properties like existence of periodic points, various forms of mixing, topological entropy and Li-Yorke chaos for the non-autonomous system. Some more studies on the topic have been conducted and can be found in the literature\cite{bp,jd,pm2}. We now give some of the basic definitions and concepts required. \\

Let $(X,d)$ be a compact metric space and let $\mathbb{F}=\{f_1,f_2,\ldots,f_k\}$ be a finite collection of continuous self maps on $X$. For any given $x_0\in X$, the family $\mathbb{F}$ gives rise to a non-autonomous dynamical system via the rule $x_{n}= f_n(x_{n-1})$. Throughout this paper, the non-autonomous system generated by the family $\mathbb{F}$ will be denoted by $(X,\mathbb{F})$. The set $\{ f_n \circ f_{n-1} \circ \ldots \circ f_1(x_0):n\in\mathbb{N}\}$ defines the orbit of the point $x_0$. For notational convenience, let $\omega_n(x) = f_n\circ f_{n-1}\circ \ldots \circ f_1(x)$ (the state of the system after $n$ iterations). \\

The system $(X,\mathbb{F})$ is said to be \textit{minimal} if every point has a dense orbit. The system $(X,\mathbb{F})$ is equicontinuous if for each $\epsilon>0$, there exists $\delta>0$ such that $d(x,y)<\delta$ implies $d(\omega_n(x),\omega_n(y))<\epsilon$ for all $n\in\mathbb{N},~~x,y\in X$. The system $(X,\mathbb{F})$ is \textit{sensitive} if there exists a $\delta>0$ such that for each $x\in X$ and each neighborhood $U$ of $x$, there exists $n\in \mathbb{N}$ such that $diam(\omega_n(U))>\delta$. If there exists $K>0$ such that $diam(\omega_n(U))>\delta$ $~~\forall n\geq K$, then the system is \textit{cofinitely sensitive}. Any pair $(x,y)$ is called proximal for $(X,\mathbb{F})$ if $\liminf\limits_{n\rightarrow \infty} d(\omega_n(x),\omega_n(y))=0$. It may be noted that the points in the diagonal set $\Delta=\{(x,x):x\in X\}$ are trivially proximal. The system $(X,\mathbb{F})$ is called distal if it does not have any non-trivial proximal pairs. A set $S$ is called \textit{scrambled} if for any pair of distinct points $x,y\in S$, $\limsup\limits_{n\rightarrow \infty} d(\omega_n(x),\omega_n(y))>0$ but $\liminf\limits_{n\rightarrow \infty} d(\omega_n(x),\omega_n(y))=0$.  In addition, if $\limsup\limits_{n\rightarrow \infty} d(\omega_n(x),\omega_n(y))>\delta$ and $\liminf\limits_{n\rightarrow \infty} d(\omega_n(x),\omega_n(y))=0$, for any pair of distinct points $x,y\in S$, then $S$ is called a $\delta$-scrambled set. The system $(X,\mathbb{F})$ is said to be Li-Yorke sensitive if there exists $\delta>0$ such that for each $x\in X$ and each neighborhood $U$ of $x$ there exists $y\in U$ such that $(x,y)$ is a $\delta$-scrambled set. The system $(X,\mathbb{F})$ is said to be \textit{Li-Yorke chaotic} if it contains an uncountable scrambled set. In case the $f_n$'s coincide, the above definitions coincide with the known notions of an autonomous dynamical system. See \cite{bc,bs,de} for details.\\

In this paper, we investigate the dynamical properties of the non-autonomous system generated by a finite collection of self maps. In the process, we relate the dynamical behavior of the non-autonomous system generated with the dynamics of the autonomous system $(X,f_k\circ f_{k-1}\circ\ldots\circ f_1)$. We relate properties like minimality, equicontinuity, proximality, various forms of sensitivities and Li-Yorke chaos for the two systems. We derive conditions under which the dynamical behavior of the system $(X,f_k\circ f_{k-1}\circ\ldots\circ f_1)$ is carried forward to the non-autonomous system (and vice-versa). We also give examples to investigate the necessity of the conditions imposed. \\

\section{Main Results}

Let $(X,\mathbb{F})$ be the non-autonomous system generated by the finite family $\mathbb{F}=\{f_1,f_2,\ldots,f_k\}$ and let $f=f_k\circ f_{k-1}\circ\ldots\circ f_1$. In this section, we relate the dynamical properties of the two systems $(X,\mathbb{F})$ and $(X,f)$. Throughout this section, the maps $f_i$ are assumed to be surjective.

\begin{Proposition}
$(X,\mathbb{F})$ is equicontinuous $\Leftrightarrow (X,f)$ is equicontinuous.
\end{Proposition}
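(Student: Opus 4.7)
The plan is to exploit the periodic structure of the sequence of maps defining $(X,\mathbb{F})$. Cycling through the family, one has the identity $\omega_{nk+j}=\omega_j\circ f^n$ for every $n\in\mathbb{N}$ and $0\le j<k$ (with the convention $\omega_0=\mathrm{id}$). In particular the orbit of $(X,f)$ is precisely the $k$-th subsequence of the orbit of $(X,\mathbb{F})$, which immediately suggests how each implication should go.

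The forward implication $(\Rightarrow)$ is essentially immediate: given $\epsilon>0$, the $\delta>0$ witnessing equicontinuity of $(X,\mathbb{F})$ also witnesses equicontinuity of $(X,f)$, since for $d(x,y)<\delta$ we have $d(f^m(x),f^m(y))=d(\omega_{mk}(x),\omega_{mk}(y))<\epsilon$ for every $m\in\mathbb{N}$.

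For the reverse implication $(\Leftarrow)$ I would proceed in two steps. First, since $X$ is a compact metric space and each $f_i$ is continuous, each of the finitely many partial compositions $\omega_0,\omega_1,\ldots,\omega_{k-1}$ is uniformly continuous, so a given $\epsilon>0$ yields a common modulus $\eta>0$ such that $d(u,v)<\eta$ implies $d(\omega_j(u),\omega_j(v))<\epsilon$ for every $0\le j<k$. Second, equicontinuity of $(X,f)$ furnishes $\delta>0$ such that $d(x,y)<\delta$ forces $d(f^n(x),f^n(y))<\eta$ for all $n\ge 0$. Writing an arbitrary index as $m=nk+j$ with $0\le j<k$ and combining the two estimates gives $d(\omega_m(x),\omega_m(y))=d(\omega_j(f^n(x)),\omega_j(f^n(y)))<\epsilon$, which is exactly equicontinuity of $(X,\mathbb{F})$.

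The only real obstacle to watch for is the "remainder" portion of the iteration when $m$ is not a multiple of $k$, since equicontinuity of $f$ by itself only controls orbits at times divisible by $k$. This is dissolved by the observation that there are only $k$ possible remainder-patterns, each a uniformly continuous map on the compact space $X$, so a single $\eta$ absorbs all of them uniformly into the prescribed $\epsilon$. No further subtlety seems required; in particular, surjectivity of the $f_i$ (a standing hypothesis in the section) is not used in this proposition.
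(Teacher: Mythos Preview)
Your proof is correct and follows essentially the same route as the paper's. Both directions match: the forward implication restricts to the subsequence $\omega_{nk}=f^n$, and the converse uses a common uniform-continuity modulus $\eta$ for the finitely many partial compositions $\omega_j$ (the paper's $g_r$) together with equicontinuity of $(X,f)$ to control $\omega_{nk+j}=\omega_j\circ f^n$.
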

\begin{proof}
Let $(X,\mathbb{F})$ be equicontinuous and let $\epsilon>0$ be given. As $(X,\mathbb{F})$ is equicontinuous, there exists $\delta>0$ such that $d(x,y)<\delta$ implies $d(\omega_n(x),\omega_n(y))<\epsilon~~\forall n\in\mathbb{N}$. In particular, $d(\omega_{nk}(x),\omega_{nk}(y))<\epsilon~~\forall n\in\mathbb{N}$ and hence $(X,f)$ is equicontinuous.

Conversely, let $(X,f)$ be equicontinuous and let $\epsilon>0$ be given. Then, as the family $\{g_r=f_r\circ \cdots f_2\circ f_1 : r=1,2,\ldots,k\}$ is a finite family of continuous maps, there exists $\eta>0$ ($\eta<\epsilon$) such that $d(x,y)<\eta$ implies $d(g_r(x),g_r(y))<\epsilon$ (for $r=1,2,\ldots,k$). As $(X,f)$ is equicontinuous, there exists $\delta>0$ ($\delta<\eta$) such that $d(x,y)<\delta$ implies $d(\omega_{nk}(x),\omega_{nk}(y))<\eta~~\forall n\in\mathbb{N}$. Consequently, $d(x,y)<\delta$ ensures $d(g_r(\omega_{nk}(x)),g_r(\omega_{nk}(y)))<\epsilon$ for all $r\in \{1,2,\ldots,k\}$ and $n\in\mathbb{Z}^+$. As any point $\omega_m(x)$ can be written as $g_r(\omega_{nk}(x))$ for some $r\in \{1,2,\ldots,k\}$ and $n\in\mathbb{Z}^+$, we have $d(\omega_m(x),\omega_m(y))<\epsilon$ for all $m\in\mathbb{N}$ and hence $(X,\mathbb{F})$ is equicontinuous.
\end{proof}

\begin{Proposition}\label{mini}
If $X$ is connected then, $(X,\mathbb{F})$ is minimal $\Leftrightarrow$ $(X,f)$ is minimal.
\end{Proposition}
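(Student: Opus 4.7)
The plan is to prove each implication separately. The backward direction, $(X,f)$ minimal $\Rightarrow(X,\mathbb{F})$ minimal, is easy and does not use connectedness: since $\omega_{nk}(x)=f^n(x)$ for every $x$, the $\mathbb{F}$-orbit of $x$ contains the $f$-orbit as a subset, so density of the latter forces density of the former.

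For the forward direction, assume $(X,\mathbb{F})$ is minimal and $X$ is connected; I will show that every $f$-orbit is dense by ruling out any proper closed $f$-invariant subset. Using Zorn's lemma, pick a minimal non-empty closed $f$-invariant set $M\subseteq X$; compactness and minimality together give $f(M)=M$. For any $x\in M$, the $f$-orbit of $x$ is dense in $M$, and decomposing times as $n=qk+r$ with $g_r=f_r\circ\cdots\circ f_1$ and $g_0=\mathrm{id}$ gives $\omega_n(x)=g_r(f^q(x))$, so that
\[
\overline{\mathrm{Orb}_{\mathbb{F}}(x)}=\bigcup_{r=0}^{k-1}g_r(M).
\]
$\mathbb{F}$-minimality therefore yields $\bigcup_{r=0}^{k-1}g_r(M)=X$. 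Writing $A_r=g_r(M)$, we have $A_k=f(M)=M=A_0$ and the cyclic surjective chain $A_0\xrightarrow{f_1}A_1\xrightarrow{f_2}\cdots\xrightarrow{f_{k-1}}A_{k-1}\xrightarrow{f_k}A_0$ is in place. Moreover, if $A_{r_0}=X$ for some $r_0$, then the surjectivity of $f_{r_0+1},\ldots,f_k$ forces $X=A_{r_0}=A_{r_0+1}=\cdots=A_k=M$, giving $M=X$ immediately; so the remaining case to handle is when every $A_r$ is a proper closed subset of $X$ whose union still equals $X$.

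The crux, and the expected main obstacle, is the connectedness step. The classical analogue is the autonomous fact that $(X,f^k)$ is minimal when $(X,f)$ is minimal and $X$ is connected: there, $M, f(M),\ldots,f^{k-1}(M)$ are all minimal for the \emph{same} system $f^k$, so they are either coincident or pairwise disjoint, and the latter is ruled out by connectedness. In our setting, each $A_r=g_r(M)$ is likewise a minimal set, but for the shifted composition $f^{(r)}=f_r\circ\cdots\circ f_1\circ f_k\circ\cdots\circ f_{r+1}$ (as follows from the semi-conjugacy $f^{(r)}\circ g_r=g_r\circ f$ together with $f$-minimality of $M$). The main difficulty is that the $A_r$'s live in different dynamical systems, so the ``equal or disjoint'' dichotomy is not immediately available. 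I plan to overcome this by combining the cyclic surjections $f_{r+1}(A_r)=A_{r+1}$ with connectedness: connectedness forbids the closed cover $\{A_r\}$ from splitting into two disjoint sub-collections, while the surjectivity of each $f_{r+1}$ transports overlaps around the cycle, ultimately forcing $A_0=A_1=\cdots=A_{k-1}$ and hence $M=X$, contradicting $M\subsetneq X$.
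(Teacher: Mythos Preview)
Your backward implication is correct and matches the paper. For the forward direction your route differs: the paper fixes $x\in X$, sets $C_r=\overline{\{\omega_{nk+r}(x):n\ge0\}}$, declares $a\sim b$ iff $a,b\in C_m$ for some $m$, asserts this is an equivalence relation whose (closed) classes are exactly $C_1,\dots,C_k$, and then uses connectedness to force each $C_r=X$. You instead pick an $f$-minimal $M$ by Zorn and cover $X$ by the sets $A_r=g_r(M)$.

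The gap in your proposal is the final step. Connectedness only tells you the $A_r$ cannot be pairwise disjoint; it does not by itself give $A_0=\cdots=A_{k-1}$. Your plan to ``transport overlaps around the cycle'' via the surjections $f_{r+1}\colon A_r\to A_{r+1}$ does not work as written: $f_{i+1}$ carries $A_i$ onto $A_{i+1}$, but for $j\neq i$ there is no reason for $f_{i+1}(A_j)$ to equal (or even lie in) $A_{j+1}$, so applying $f_{i+1}$ to a point of $A_i\cap A_j$ need not produce a point of $A_{i+1}\cap A_{j+1}$. You correctly diagnosed that the autonomous ``equal or disjoint'' dichotomy is unavailable because the $A_r$ are minimal for \emph{different} maps $f^{(r)}$, but the substitute you sketch does not supply that dichotomy or any replacement for it. Nothing in the argument rules out the $A_r$ being proper, mutually overlapping closed sets whose union is $X$---a configuration connectedness certainly allows. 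To finish along your lines you would need to show that $A_i\cap A_j\neq\emptyset$ forces $A_i=A_j$ (the analogue of what the paper asserts, without justification, for its $C_r$), and this is precisely the step you have not carried out.
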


\begin{proof}
Let $(X,\mathbb{F})$ be minimal and let $x\in X$. As orbit of $x$ is dense in $(X,\mathbb{F})$, for each $y\in X$ there exists a sequence $(m_i)$ of natural numbers and $r\in\{1,2,\ldots,k\}$ such that $\omega_{km_i+r}(x)$ converges to $y$ (follows from the fact that the generating family $\mathbb{F}$ is finite). For any $a,b\in X$, we say $a$ is related to $b$ in $\mathbb{F}$-sense (denoted as $a\mathbb{F}b$) if there exists $m \in \{1,2,\ldots,k\}$ and sequences $(s_i)$ and $(t_i)$ of natural numbers such that $(\omega_{ks_i+m}(x),\omega_{kt_i+m}(x))$ converges to $(a,b)$ (in the product topology). Note that the relation defines an equivalence relation on $X$ and hence partitions $X$ into $k$ disjoint sets $C_1,C_2,\ldots,C_k$ of $X$. Further, as each $C_k$ is closed (and hence clopen), connectedness of $X$ implies $C_1=C_2=\ldots=C_k=X$ and hence orbit of $x$ is dense in $(X,f)$.

Conversely, as orbit of any point $x$ under $(X,f)$ is contained in orbit of $x$ in $(X,\mathbb{F})$, minimality of $(X,f)$ implies minimality of $(X,\mathbb{F})$ and hence the proof of converse is complete.
\end{proof}

\begin{Remark}
The above result establishes the equivalence of minimality for the two systems $(X,\mathbb{F})$ and $(X,f)$ when the space $X$ is connected. Although the proof of converse is trivial, the proof for the forward part partitions the space $X$ into $k$ (atmost) disjoint non-empty clopen sets $C_r$, where $C_r$ is the set of limit points of the sequence $(\omega_{nk+r}(x))$~~ ($r=1,2,\ldots,k$). Consequently, if $X$ is connected, the generated sets coincide which in turn implies the denseness of orbit of $x$ (for $(X,f)$) and hence minimality of the two systems is equivalent. However, the equivalence holds good only when the space $X$ is connected and fails to hold good in the absence of the stated condition. We now give an example in support of our claim.
\end{Remark}

\begin{ex}\label{min}
Let $S_r=\{re^{i\theta}: 0\leq\theta\leq 2\pi\}$ ($r=1,2$) and let $X=S_1\cup S_2$. Let $\alpha\in\mathbb{R}$ be an irrational multiple of $2\pi$. Define $f_1,f_2:X\rightarrow X$ as

$f_1(x) = \left\{%
\begin{array}{ll}
            (r+1)e^{i(\theta+\alpha)}  & \text{for~~} r=1 \\
           (r-1)e^{i(\theta+2\alpha)} & \text{for~~} r=2 \\
\end{array} \right.$

$f_2(x) = \left\{%
\begin{array}{ll}
            (r+1)e^{i(\theta+2\alpha)}  & \text{for~~} r=1 \\
           (r-1)e^{i(\theta+\alpha)} & \text{for~~} r=2 \\
\end{array} \right.$

It may be noted that both $f_1$ and $f_2$ map $S_1$ to $S_2$ (and vice-versa) with an additional rotation of angle $\alpha$ (or $2\alpha$). Further as orbit of any point $x$ in $S_i$ is a rotation on $S_i$ by angle $2\alpha$ (or $4\alpha$) at even iterates and visits the other component of the space $X$ via a rotation by angle $2\alpha$ (or $4\alpha$) at odd iterates, the system $(X,\mathbb{F})$ is minimal. However, as $f_2\circ f_1$ keeps each $S_i$ invariant, the system $(X,f)$ is not minimal.
\end{ex}

\begin{Remark}\label{automin}
The discussions above establish that if the space $X$ is connected, the system $(X,\mathbb{F})$ is minimal if and only if the system $(X,f)$ is minimal. Further, Example \ref{min} proves that connectedness is indeed a necessary condition and the result does not hold good in the absence of the condition stated. It may be noted that the proof of Theorem \ref{mini} does not require the maps $f_i$ in the generating family to be distinct. Thus for an autonomous system $(X,f)$, a similar proof establishes the minimality of $f^k$ (from minimality of $f$) when the space $X$ is connected. Consequently, if the space $X$ is connected, an autonomous system $(X,f)$ is minimal if and only if $(X,f^m)$ is minimal (for each $m\in\mathbb{N}$). Further, connectedness is once again a necessary condition and the derived conclusion does not hold good when the stated condition is dropped. Hence we obtain the following corollary.
\end{Remark}

\begin{Cor}
If $X$ is connected then, $(X,f)$ is minimal $\Leftrightarrow$ $(X,f^m)$ is minimal for each $m\in\mathbb{N}$. Further, there exists a disconnected space $X$ and a continuous self map $f$ on $X$ such that $(X,f)$ is minimal but $(X,f^2)$ is not minimal.
\end{Cor}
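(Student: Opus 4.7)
The plan is to read the equivalence part of the corollary as a direct specialization of Proposition \ref{mini}, and to produce the counterexample by copying the idea already exploited in Example \ref{min}. Because almost all the real work has been done in Proposition \ref{mini}, the corollary should come out as a short bookkeeping argument plus one concrete construction.

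For the equivalence, I would fix $m\in\mathbb{N}$ and apply Proposition \ref{mini} to the constant family $\mathbb{F}=\{f,f,\ldots,f\}$ consisting of $m$ copies of $f$. Two observations then close the matter. First, the orbit of any $x\in X$ under $(X,\mathbb{F})$ is literally $\{f^n(x):n\in\mathbb{N}\}$, so $(X,\mathbb{F})$ is minimal iff $(X,f)$ is. Second, the composition $f_m\circ f_{m-1}\circ\cdots\circ f_1$ equals $f^m$, so under the connectedness hypothesis Proposition \ref{mini} gives that $(X,\mathbb{F})$ is minimal iff $(X,f^m)$ is. Chaining these two equivalences yields the forward implication; the reverse implication is routine since the $f$-orbit of $x$ contains the $f^m$-orbit of $x$. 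The only subtlety is the legitimacy of plugging repeated maps into Proposition \ref{mini}, which is already pointed out in Remark \ref{automin}, so no extra verification is required.

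For the counterexample, I would adapt the construction of Example \ref{min} to a single autonomous map. Let $S_r=\{re^{i\theta}:0\le\theta<2\pi\}$ for $r=1,2$, put $X=S_1\cup S_2$, and fix $\alpha$ an irrational multiple of $2\pi$. Define $f:X\to X$ by $f(re^{i\theta})=(3-r)e^{i(\theta+\alpha)}$; thus $f$ swaps the two components while rotating by $\alpha$. Then $f^2$ leaves each $S_r$ invariant and acts there as rotation by $2\alpha$, so $(X,f^2)$ cannot have a dense orbit. On the other hand, the $f$-orbit of any $x\in S_r$ lies in $S_r$ at even iterates (as an irrational rotation orbit, hence dense in $S_r$) and in $S_{3-r}$ at odd iterates (again dense there), so $(X,f)$ is minimal. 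The main obstacle is essentially nil: once Proposition \ref{mini} is in hand, the principal care needed is to confirm that the rotation by $2\alpha$ remains irrational (immediate from $\alpha$ being an irrational multiple of $2\pi$) and that the even and odd subsequences of the $f$-orbit fill their respective components densely, both of which are standard facts about circle rotations.
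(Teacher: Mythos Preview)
Your proposal is correct and follows essentially the same approach as the paper: you derive the equivalence by specializing Proposition~\ref{mini} (via Remark~\ref{automin}) to the constant family $\{f,\ldots,f\}$, exactly as the paper indicates, and for the counterexample you use a swap-and-rotate map on two disjoint circles, which is the same idea as the paper's use of $f_1$ from Example~\ref{min}. The only cosmetic difference is that the paper reuses $f_1$ verbatim (whose square is rotation by $3\alpha$ on each circle), whereas you write down the slightly cleaner symmetric version $f(re^{i\theta})=(3-r)e^{i(\theta+\alpha)}$ (whose square is rotation by $2\alpha$); both work for the same reason.
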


\begin{proof}
The proof follows from the discussions in Remark \ref{automin}. The conclusion follows directly from Example \ref{mini} as $(X,f_1)$ is minimal but $(X,f_1^2)$ is not minimal.
\end{proof}

\begin{Proposition}
$(x,y)$ is proximal in $(X,\mathbb{F})$ if and only if $(x,y)$ is proximal in $(X,f)$.
\end{Proposition}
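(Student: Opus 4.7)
The plan is to handle the two directions separately, with the converse being essentially immediate and the forward direction requiring a short compactness/finiteness argument.

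For the converse, I would note that the orbit of a point under $(X,f)$ is precisely the subsequence $(\omega_{nk}(x))_{n\in\mathbb{N}}$ of the non-autonomous orbit. Thus if $(x,y)$ is proximal in $(X,f)$ there is a sequence $m_i\to\infty$ with $d(f^{m_i}(x),f^{m_i}(y))=d(\omega_{km_i}(x),\omega_{km_i}(y))\to 0$, which forces $\liminf_{n\to\infty} d(\omega_n(x),\omega_n(y))=0$ and gives proximality in $(X,\mathbb{F})$.

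For the forward direction, suppose $(x,y)$ is proximal in $(X,\mathbb{F})$ and pick $n_i\to\infty$ with $d(\omega_{n_i}(x),\omega_{n_i}(y))\to 0$. Write $n_i=km_i+r_i$ with $r_i\in\{0,1,\ldots,k-1\}$; since the generating family is finite, by passing to a subsequence I may assume $r_i=r$ is constant. If $r=0$ we are already done, so assume $r\in\{1,\ldots,k-1\}$. The key observation is that the remaining composition $h_r=f_k\circ f_{k-1}\circ\cdots\circ f_{r+1}$ is a continuous self-map of the compact space $X$, hence uniformly continuous, and satisfies $\omega_{k(m_i+1)}(z)=h_r(\omega_{km_i+r}(z))$ for $z\in\{x,y\}$. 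Applying uniform continuity of $h_r$ to the pair $(\omega_{km_i+r}(x),\omega_{km_i+r}(y))$ then yields $d(f^{m_i+1}(x),f^{m_i+1}(y))=d(\omega_{k(m_i+1)}(x),\omega_{k(m_i+1)}(y))\to 0$, which is exactly proximality of $(x,y)$ in $(X,f)$.

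The main obstacle I anticipate is exactly this case $r\ne 0$: one cannot simply invert the initial block $g_r=f_r\circ\cdots\circ f_1$ to pull the closeness back to a power of $f$ (the $f_i$ are only assumed surjective, not injective), so the natural attempt to push backwards fails. The fix is to push \emph{forwards} instead, using the trailing composition $h_r$ to reach the next multiple of $k$; this only needs continuity on a compact space, which is already in force. With this observation the finite-family pigeonhole combined with uniform continuity closes the argument cleanly, and no further hypotheses (such as connectedness or equicontinuity) are required.
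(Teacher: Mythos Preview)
Your proposal is correct and follows essentially the same route as the paper: both directions are handled as you describe, with the forward direction using the pigeonhole on residues modulo $k$ and then pushing forward via the trailing block $h_r=f_k\circ\cdots\circ f_{r+1}$ to land on a multiple of $k$. The only cosmetic difference is that the paper first invokes compactness of $X$ to extract a subsequence with $(\omega_{r_{n_l}}(x),\omega_{r_{n_l}}(y))\to(z,z)$ and then applies continuity of $h_r$ at $z$, whereas you bypass the limit point by using uniform continuity of $h_r$ directly; both devices yield the same conclusion.
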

\begin{proof}
Let $(x_1,x_2)$ be proximal for $(X,\mathbb{F})$ and let $(r_n)$ be the sequence of natural numbers such that $\lim \limits_{n\rightarrow\infty} d(\omega_{r_n}(x_1),\omega_{r_n}(x_2))=0$. As $X$ is compact, without loss of generality (by passing on subsequence which we again denote by $(r_n)$), we obtain an element $z\in X$ such that $(\omega_{r_n}(x_1),\omega_{r_n}(x_2))$ converges to $(z,z)$. As the family $\mathbb{F}$ is finite, there exists a subsequence $(r_{n_l})$ of $(r_n)$, $s\in\{1,2,\ldots,k\}$ and a sequence $(km_{n_l})$ (of multiples of $k$) such that $\omega_{r_{n_l}}(x_i)= f_s\circ f_{s-1}\circ\ldots\circ f_1 (\omega_{km_{n_l}}(x_i))$ (for $i=1,2$). As $\omega_{r_{n_l}}(x_i)$ converges (to $z$) and $f_k\circ f_{k-1}\circ\ldots\circ f_{s+1}$ is continuous, $f_k\circ f_{k-1}\circ\ldots\circ f_{s+1}(\omega_{r_{n_l}}(x_i))$ converges to $f_k\circ f_{k-1}\circ\ldots\circ f_{s+1}(z)$ (for $i=1,2$) or $\omega_{k(m_{n_l}+1)}(x_i)$ converges to $f_k\circ f_{k-1}\circ\ldots\circ f_{s+1}(z)$ (for $i=1,2$). Consequently, $\lim \limits_{l\rightarrow \infty} d(\omega_{k(m_{n_l}+1)}(x_1), \omega_{k(m_{n_l}+1)}(x_2))=0$ and hence $(x_1,x_2)$ is proximal for $(X,f)$.

Conversely, as orbit of any point $x$ under $(X,f)$ is a subset of orbit of $x$ under $(X,\mathbb{F})$, proximality of the pair $(x_1,x_2)$ for $(X,f)$ ensures proximality of $(x_1,x_2)$ for $(X,\mathbb{F})$ and hence the proof of converse is complete.
\end{proof}

\begin{Remark}\label{prox}
The above result establishes the equivalence of proximal pairs for the two systems $(X,\mathbb{F})$ and $(X,f)$. While proof of the converse is straightforward, the forward part uses the fact that if $k$ is a fixed natural number and $(r_n)$ is a sequence of natural numbers then there exists a subsequence $(r_{n_l})$ of $(r_n)$ such that $(r_{n_l}~~ modulo ~~k)$ is constant and hence the set of proximal pairs for the two systems coincide. Further, as equivalence of proximal pairs for two systems ensures equivalence of distal pairs, the system $(X,\mathbb{F})$ is distal if and only if $(X,f)$ is distal. Hence we get the following corollary.
\end{Remark}

\begin{Cor}
$(X,\mathbb{F})$ is distal $\Leftrightarrow ~~(X,f)$ is distal.
\end{Cor}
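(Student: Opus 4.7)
The plan is to deduce this corollary immediately from the preceding Proposition, which asserts that $(x,y)$ is proximal in $(X,\mathbb{F})$ if and only if it is proximal in $(X,f)$. Since distality is defined purely in terms of the set of (non-trivial) proximal pairs, any statement identifying these sets for two systems automatically identifies the distality property.

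More concretely, I would argue by contrapositive. Suppose $(X,\mathbb{F})$ is not distal. Then by definition there exist points $x,y\in X$ with $x\neq y$ such that $(x,y)$ is proximal in $(X,\mathbb{F})$, i.e.\ $\liminf_{n\to\infty}d(\omega_n(x),\omega_n(y))=0$. By the preceding Proposition, the same pair $(x,y)$ is proximal in $(X,f)$; since $x\neq y$, it is a non-trivial proximal pair for $(X,f)$, so $(X,f)$ is not distal. The reverse implication is symmetric: if $(X,f)$ is not distal, the same pair witnesses non-distality of $(X,\mathbb{F})$ by the other direction of the Proposition.

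There is essentially no obstacle to this proof; the substantive work was already carried out in establishing that the proximal relations of the two systems coincide (the forward direction uses compactness of $X$ and the pigeonhole principle on residues modulo $k$, while the converse is trivial because the $(X,f)$-orbit is contained in the $(X,\mathbb{F})$-orbit). The Corollary is a formal consequence, as is explicitly flagged in Remark~\ref{prox}.
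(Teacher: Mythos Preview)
Your proposal is correct and matches the paper's approach exactly: the paper simply notes that the Corollary follows from Remark~\ref{prox}, which records that the sets of proximal pairs for $(X,\mathbb{F})$ and $(X,f)$ coincide by the preceding Proposition, whence distality is equivalent. Your contrapositive formulation is just a slightly more explicit version of the same one-line deduction.
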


\begin{proof}
The proof follows from discussions in Remark \ref{prox}.
\end{proof}

%\begin{Proposition}
%$(X,\mathbb{F})$ is pointwise sensitive if and only if $(X,f)$ is pointwise sensitive.
%\end{Proposition}
%\begin{proof}
%Let $(X,\mathbb{F})$ be pointwise sensitive and let $x\in X$. Let $\delta_x>0$ be the sensitivity constant at $x$ (for $(X,\mathbb{F})$). If $(X,f)$ were not sensitive at $x$, then for each $n\in\mathbb{N}$, there exists a neighborhood $U_n$ of $x$ such that $diam (\omega_{nk}(U_n))<\frac{1}{n}$. As the family $\{g_r=f_r\circ f_{r-1}\circ\ldots\circ f_1 : r=1,2,\ldots,k\}$ is finite, there exists $\eta>0$ ($\eta<\delta_x$) such that $d(x,y)<\eta$ implies $d(g_r(x),g_r(y))<\frac{\delta_x}{2}$ for $r=1,2,\ldots,k$. Choosing $n\in\mathbb{N}$ such that $\frac{1}{n}<\eta$, we obtain $d(g_r(\omega_n(x)),g_r(\omega_n(y)))<\frac{\eta}{2}~~\forall x,y\in U_n$
%\end{proof}

\begin{Proposition}\label{sen}
$(X,\mathbb{F})$ is sensitive if and only if $(X,f)$ is sensitive.
\end{Proposition}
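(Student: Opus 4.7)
The plan is to prove the two implications separately, treating the direction from $(X,f)$ to $(X,\mathbb{F})$ as essentially free and the reverse direction via a uniform continuity argument on the ``partial composition'' maps $g_r = f_r \circ f_{r-1} \circ \cdots \circ f_1$.

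For the easy direction, suppose $(X,f)$ is sensitive with constant $\delta$. Given $x \in X$ and a neighborhood $U$ of $x$, choose $n$ with $\mathrm{diam}(f^n(U)) > \delta$. Since $\omega_{nk}(U) = f^n(U)$, the same $nk$ witnesses sensitivity of $(X,\mathbb{F})$ with the same constant $\delta$. Nothing more is needed here.

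For the harder direction, assume $(X,\mathbb{F})$ is sensitive with constant $\delta$. The key structural fact I will use is that every iterate $\omega_m$ of the non-autonomous system factors as $\omega_m = g_r \circ f^n$, where $m = nk + r$ with $0 \le r < k$, $g_0 = \mathrm{id}$, and $g_r = f_r \circ \cdots \circ f_1$ for $1 \le r \le k-1$. Because $X$ is compact and the family $\{g_1, \ldots, g_{k-1}\}$ is finite, each $g_r$ is uniformly continuous, so I can pick a single $\eta > 0$ with $\eta \le \delta$ such that $d(u,v) < \eta$ forces $d(g_r(u), g_r(v)) < \delta$ for every $r \in \{1,\ldots,k-1\}$.

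Now, given any $x \in X$ and neighborhood $U$, sensitivity of $(X,\mathbb{F})$ produces $m$ with $\mathrm{diam}(\omega_m(U)) > \delta$. Writing $m = nk+r$ and using the factorization, there exist $a,b \in f^n(U)$ with $d(g_r(a), g_r(b)) > \delta$. If $r=0$ this says $d(a,b) > \delta \ge \eta$, and if $r \ge 1$ the contrapositive of the uniform continuity statement gives $d(a,b) \ge \eta$. In either case $\mathrm{diam}(f^n(U)) \ge \eta$, so $(X,f)$ is sensitive with constant $\eta/2$. The only real ``obstacle'' is the bookkeeping of selecting an $\eta$ that works uniformly for all $k-1$ maps $g_r$ at once, and finiteness of $\mathbb{F}$ handles that directly; no use of the surjectivity hypothesis on the $f_i$ is required for this argument.
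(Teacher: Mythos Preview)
Your proof is correct and follows essentially the same approach as the paper: both use the factorization $\omega_{nk+r}=g_r\circ f^n$ and the uniform continuity of the finitely many partial compositions $g_r$ to transfer sensitivity from $(X,\mathbb{F})$ to $(X,f)$. The only cosmetic difference is that the paper phrases the forward direction as a proof by contradiction (if some $U$ never expands past $\eta$ under $f$, then $g_r$ keeps every $\omega_m(U)$ below $\delta$), whereas you argue directly via the contrapositive at a single time $m$; the content is identical.
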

\begin{proof}
Let $(X,\mathbb{F})$ be sensitive (with sensitivity constant $\delta$). As the family $\{g_r=f_r\circ f_{r-1}\circ\ldots\circ f_1 : r=1,2,\ldots,k-1\}$ is finite family of continuous (uniformly continuous) maps, there exists $\eta>0$ such that $d(x,y)<\eta$ ensures $d(g_r(x),g_r(y))<\delta~~\forall x,y\in X$. We claim that $\eta$ is sensitivity constant for $(X,f)$. Note that if there exists open set $U$ such that $diam(\omega_{nk}(U))<\eta~~ \forall n$ then, $diam(g_r(\omega_{nk}(U)))<\delta$ for all $r=1,2,\ldots,k$ and $n\in \mathbb{N}$. As $\{g_r(\omega_{nk}(U)): r=1,2,\ldots,k, n\in\mathbb{N}\}$ coincides with the trajectory of the open set $U$ under $(X,\mathbb{F})$, $diam(\omega_n(U))<\delta~~\forall n\in\mathbb{N}$ which contradicts sensitivity of the system $(X,\mathbb{F})$ and hence any open set $U$ expands (to size more than $\eta$) for $(X,f)$. Thus $(X,f)$ is sensitive (with sensitivity constant $\eta$) and the proof of forward part is complete.

Conversely, as orbit of any point $x$ under $(X,f)$ is contained in orbit of $x$ in $(X,\mathbb{F})$, sensitivity of $(X,f)$ implies sensitivity of $(X,\mathbb{F})$ and hence the proof of converse is complete.
\end{proof}

\begin{Remark}\label{li}
The above proof establishes the equivalence of sensitivity for the two systems $(X,\mathbb{F})$ and $(X,f)$. While proof in one of the directions is trivial, the other direction uses the fact that any continuous function on a compact metric space is uniformly continuous. However, the proof does not preserve the sensitivity constant and hence the two systems may be sensitive with different constant of sensitivity. It may be noted that cofinite sensitivity of $(X,\mathbb{F})$ ensures cofinite sensitivity of $(X,f)$. Also, a proof similar to proof of Theorem \ref{sen} (considering the family $\{h_r=f_k\circ f_{k-1}\circ\ldots\circ f_{k-r}: r=0,1,\ldots,k-1\}$ and proving that common constant of uniform continuity is sensitivity constant for $(X,\mathbb{F})$) establishes that cofinite sensitivity of $(X,f)$ ensures cofinite sensitivity of $(X,\mathbb{F})$ and hence cofinite sensitivity is equivalent for the two systems. %Further, as $d(\omega_{nk+r}(x),\omega_{nk+r}(y))>\delta$ ensures $d(\omega_{nk}(x),\omega_{nk}(y))>\eta$ (and $d(\omega_{nk}(x),\omega_{nk}(y))>\delta$ ensures $d(\omega_{nk+r}(x),\omega_{nk+r}(y))>\eta$) for $r=1,2,\ldots,k$ and proximal pairs are preserved for the two systems $(X,\mathbb{F})$ and $(X,f)$, Li-Yorke sensitivity is equivalent for the two systems (with different constants of sensitivity).
Thus we get the following corollary.
\end{Remark}

\begin{Cor}
$(X,\mathbb{F})$ is cofinite sensitive if and only if $(X,f)$ is cofinite sensitive.
\end{Cor}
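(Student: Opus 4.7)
The plan is to handle the two directions separately, following the template of Proposition \ref{sen}. The forward direction is immediate: if $(X,\mathbb{F})$ is cofinitely sensitive with constant $\delta$ and threshold $K$ for a given neighbourhood $U$, then restricting attention to indices divisible by $k$ and using $\omega_{mk}=f^m$ gives $diam(f^m(U))>\delta$ for every $m\geq\lceil K/k\rceil$, so $(X,f)$ is cofinitely sensitive with the same constant $\delta$.

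For the converse, suppose $(X,f)$ is cofinitely sensitive with constant $\delta$. The structural observation is that every iterate $\omega_n$ can be completed to a full $f$-iterate by appending at most $k-1$ generators: writing $n=mk-r$ with $0\leq r<k$, periodicity of the generating family yields
$$
\omega_{mk} \;=\; h_r \circ \omega_n,\qquad h_r := f_k \circ f_{k-1} \circ \cdots \circ f_{k-r+1},
$$
with the convention $h_0=\mathrm{id}$. Since $\{h_r : 0\le r<k\}$ is a finite family of continuous self-maps of the compact space $X$, I would use their shared modulus of uniform continuity to pick a single $\eta>0$ such that $d(x,y)<\eta$ implies $d(h_r(x),h_r(y))<\delta$ for every $r\in\{0,1,\ldots,k-1\}$.

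I would then verify that $\eta$ is a cofinite sensitivity constant for $(X,\mathbb{F})$. Given a neighbourhood $U$ with threshold $K$ for $(X,f)$, take any $n\geq kK$ and write $n=mk-r$; then $m\geq K$ and $\omega_{mk}(U)=h_r(\omega_n(U))$. If $diam(\omega_n(U))<\eta$, then uniform continuity forces $diam(h_r(\omega_n(U)))\leq\delta$, contradicting $diam(\omega_{mk}(U))=diam(f^m(U))>\delta$. Hence $diam(\omega_n(U))\geq\eta$ for every $n\geq kK$, so $(X,\mathbb{F})$ is cofinitely sensitive with constant $\eta$.

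The step to get right is choosing the pull-back decomposition $\omega_{mk}=h_r\circ\omega_n$ rather than the push-forward decomposition $\omega_{mk+r}=g_r\circ f^m$ used in Proposition \ref{sen}. Only the former runs uniform continuity in the direction that converts a diameter \emph{lower} bound on $f^m(U)$ into a diameter lower bound on $\omega_n(U)$; the latter would only let us push a bound from $\omega_n(U)$ forward to $g_r(\omega_n(U))$, which is the direction needed for the original sensitivity proposition but is useless here.
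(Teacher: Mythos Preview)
Your proof is correct and matches the paper's approach: the paper's Remark \ref{li} sketches exactly the same argument, using the tail-composition family $\{h_r=f_k\circ f_{k-1}\circ\ldots\circ f_{k-r}\}$ and its common modulus of uniform continuity to pull the diameter lower bound on $f^m(U)$ back to $\omega_n(U)$. Your explicit observation that the $h_r$ decomposition (rather than the $g_r$ decomposition of Proposition \ref{sen}) is what runs uniform continuity in the correct direction is precisely the point the paper is gesturing at.
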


\begin{proof}
The proof follows from discussions in Remark \ref{li}.
\end{proof}

\begin{Proposition}\label{lis}
$(X,\mathbb{F})$ is Li-Yorke sensitive if and only if $(X,f)$ is Li-Yorke sensitive.
\end{Proposition}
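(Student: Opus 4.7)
The plan is to treat the two directions asymmetrically. The converse direction is immediate: if $(x,y)$ is $\delta$-scrambled for $(X,f)$ then, since $(\omega_{km}(x))_{m\in\mathbb{N}}$ is a subsequence of $(\omega_n(x))_{n\in\mathbb{N}}$, one has $\limsup_n d(\omega_n(x),\omega_n(y))\geq \limsup_m d(\omega_{km}(x),\omega_{km}(y))>\delta$ and $\liminf_n d(\omega_n(x),\omega_n(y))\leq \liminf_m d(\omega_{km}(x),\omega_{km}(y))=0$. Hence the same $y\in U$ witnesses Li-Yorke sensitivity of $(X,\mathbb{F})$ at $x$, with the same constant $\delta$, and no further work is needed.

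For the forward direction, suppose $(X,\mathbb{F})$ is Li-Yorke sensitive with constant $\delta$. Invoking compactness of $X$ together with the (uniform) continuity of the finite family $\{g_s=f_s\circ\cdots\circ f_1:s=1,\ldots,k\}$, I would pick a single $\eta\in(0,\delta)$ such that $d(a,b)<\eta$ implies $d(g_s(a),g_s(b))<\delta$ for every $s\in\{1,\ldots,k\}$. The claim is that $\eta/2$ is a Li-Yorke sensitivity constant for $(X,f)$. Given $x\in X$ and a neighborhood $U$ of $x$, choose $y\in U$ with $(x,y)$ being $\delta$-scrambled for $(X,\mathbb{F})$; the task is then to upgrade this to an $(\eta/2)$-scrambled pair for $(X,f)$, i.e., to control both the $\liminf$ and the $\limsup$ of $d(\omega_{km}(x),\omega_{km}(y))$ along the subsequence of $k$-multiples.

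The $\liminf$ clause reduces to the same device used in the earlier proximality proposition: starting from a sequence $(r_n)$ with $d(\omega_{r_n}(x),\omega_{r_n}(y))\to 0$, pass to a subsequence along which $r_n\equiv s\pmod{k}$ is constant and both $\omega_{r_n}(x)$ and $\omega_{r_n}(y)$ converge to a common limit $z$, then apply continuity of $f_k\circ\cdots\circ f_{s+1}$ to conclude $d(\omega_{k(m_n+1)}(x),\omega_{k(m_n+1)}(y))\to 0$. The $\limsup$ clause is the genuine obstacle: take $(r_n)$ with $d(\omega_{r_n}(x),\omega_{r_n}(y))>\delta$, write $r_n=km_n+s_n$ with $s_n\in\{1,\ldots,k\}$, and pass to a subsequence on which $s_n=s$ is constant. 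Since $\omega_{r_n}(\cdot)=g_s(\omega_{km_n}(\cdot))$, the contrapositive of the choice of $\eta$ forces $d(\omega_{km_n}(x),\omega_{km_n}(y))\geq\eta$, hence $\limsup_m d(\omega_{km}(x),\omega_{km}(y))\geq\eta>\eta/2$. The hard part is this $\limsup$ step: one must reverse uniform continuity via the contrapositive, which is only legitimate because finiteness of $\mathbb{F}$ yields a common modulus $\eta$ that survives the pigeonhole passage through residues modulo $k$ — exactly the same phenomenon that drove the proof of Proposition \ref{sen}, now combined with the proximal-style argument to cover both sides of the scrambling definition simultaneously.
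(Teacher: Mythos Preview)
Your proposal is correct and follows essentially the same approach as the paper: the converse is handled by the trivial subsequence inclusion, and the forward direction combines the proximality equivalence (for the $\liminf$ clause) with the uniform-continuity/contrapositive device from Proposition~\ref{sen} (for the $\limsup$ clause), exactly as the paper does. Your exposition is more explicit about the pigeonhole on residues modulo $k$ and the choice of $\eta/2$ as the sensitivity constant, but the underlying argument is identical.
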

\begin{proof}

Let $(X,\mathbb{F})$ be Li-Yorke sensitive (with sensitivity constant $\delta$) and let $x\in X$. For any neighborhood $U$ of $x$, there exists $y\in U$ such that $\liminf \limits_{n\rightarrow \infty} d(\omega_n(x),\omega_n(y))=0$ and $\limsup \limits_{n\rightarrow \infty} d(\omega_n(x),\omega_n(y))>\delta$. As $\delta$ is constant of sensitivity, a proof similar to Proposition \ref{sen} ensures existence of $\eta>0$  such that $\limsup \limits_{n\rightarrow \infty} d(\omega_{nk}(x),\omega_{nk}(y))>\eta$. Further, as a pair is  proximal for $(X,\mathbb{F})$ if and only if it is proximal for $(X,f)$, $(x,y)$ is $\delta$ Li-Yorke pair for $(X,\mathbb{F})$ ensures that $(x,y)$ is $\eta$ Li-Yorke pair for $(X,f)$. Hence, $(X,\mathbb{F})$ is Li-Yorke sensitive implies $(X,f)$ is Li-Yorke sensitive and the proof of forward part is complete.

Conversely, as orbit of any point $x$ under $(X,f)$ is contained in orbit of $x$ in $(X,\mathbb{F})$, Li-Yorke sensitivity of $(X,f)$ implies Li-Yorke sensitivity of $(X,\mathbb{F})$ and hence the proof of converse is complete.
\end{proof}

\begin{Cor}
$(X,\mathbb{F})$ is Li-Yorke chaotic if and only if $(X,f)$ is Li-Yorke chaotic.
\end{Cor}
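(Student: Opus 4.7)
The plan is to show that the \emph{same} uncountable set $S\subseteq X$ is scrambled for $(X,\mathbb{F})$ if and only if it is scrambled for $(X,f)$; uncountability is then preserved automatically, giving the equivalence. The argument reuses only the proximality equivalence (already proved) and the modulo-$k$ pigeonhole plus uniform continuity trick that appears in Propositions \ref{sen} and \ref{lis}.

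For the forward direction, I would fix an arbitrary pair of distinct points $x,y\in S$ scrambled for $(X,\mathbb{F})$. The liminf condition $\liminf_n d(\omega_n(x),\omega_n(y))=0$ says that $(x,y)$ is proximal for $(X,\mathbb{F})$, so by the proximality proposition it is proximal for $(X,f)$, i.e.\ $\liminf_n d(f^n(x),f^n(y))=0$. For the limsup condition, set $c=\limsup_n d(\omega_n(x),\omega_n(y))>0$ and pick $n_i\to\infty$ with $d(\omega_{n_i}(x),\omega_{n_i}(y))\to c$. Write $n_i=km_i+r_i$ with $r_i\in\{0,1,\ldots,k-1\}$ and thin to a subsequence on which $r_i$ is constantly equal to some $r$. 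If $r=0$, then $d(f^{m_i}(x),f^{m_i}(y))\to c$ directly. If $r\geq 1$, then $\omega_{n_i}(\cdot)=g_r(f^{m_i}(\cdot))$ with $g_r=f_r\circ\cdots\circ f_1$, and uniform continuity of $g_r$ on the compact space $X$ forbids $d(f^{m_i}(x),f^{m_i}(y))\to 0$, so some further subsequence stays bounded below by a positive constant. In either case $\limsup_n d(f^n(x),f^n(y))>0$, so $(x,y)$ is a scrambled pair for $(X,f)$; hence $S$ itself is scrambled for $(X,f)$.

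The converse is straightforward. Given an uncountable scrambled set $S$ for $(X,f)$ and distinct $x,y\in S$, the proximality proposition turns $\liminf_n d(f^n(x),f^n(y))=0$ into $\liminf_n d(\omega_n(x),\omega_n(y))=0$, while $\limsup_n d(\omega_n(x),\omega_n(y))\geq\limsup_n d(\omega_{nk}(x),\omega_{nk}(y))>0$ is immediate because $\{\omega_{nk}(\cdot)\}$ is a subsequence of $\{\omega_n(\cdot)\}$. So $S$ is scrambled for $(X,\mathbb{F})$.

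The only real obstacle is the limsup step in the forward direction, since a priori the $\mathbb{F}$-limsup could be realized exclusively along indices that are not multiples of $k$. The standard fix used throughout the paper, namely passing to a subsequence with fixed residue modulo $k$ and invoking uniform continuity of the partial compositions $g_r$, handles this cleanly and is the only technical input beyond the already-proved proximality equivalence.
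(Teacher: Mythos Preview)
Your proof is correct and follows essentially the same strategy as the paper: the paper simply remarks that Proposition~\ref{lis} already establishes that Li-Yorke pairs are preserved between $(X,\mathbb{F})$ and $(X,f)$ (via the proximality equivalence for the liminf and the modulo-$k$/uniform-continuity trick for the limsup), so the corollary follows immediately. You have spelled out that argument directly rather than citing the proposition, but the content is the same.
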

\begin{proof}
As Proposition \ref{lis} establishes preservability of Li-Yorke pairs between the systems $(X,\mathbb{F})$ and $(X,f)$, the corollary is the direct consequence of Proposition \ref{lis}.
\end{proof}

\bibliography{xbib}

\end{document}